\documentclass[12pt,letterpaper]{article}
\usepackage{amsmath,amsfonts,amsthm,amssymb}

\hyphenation{ar-chi-me-de-an}
\setlength\overfullrule{5pt} 


\swapnumbers 

\newtheorem{lemma}{Lemma}[section]

\newtheorem{theorem}[lemma]{Theorem}

\theoremstyle{definition} 
\newtheorem{definition}[lemma]{Definition}
\newtheorem{remark}[lemma]{Remark}




\newcommand\reals{{\mathbb R}}

\begin{document}

\title{Every synaptic algebra has the monotone square
root property}

\author{David J. Foulis{\footnote{Emeritus Professor, Department of
Mathematics and Statistics, University of Massachusetts, Amherst,
MA; Postal Address: 1 Sutton Court, Amherst, MA 01002, USA;
foulis@math.umass.edu.}}\hspace{.05 in}, Anna Jen\v cov\'a  and Sylvia
Pulmannov\'{a}{\footnote{ Mathematical Institute, Slovak Academy of
Sciences, \v Stef\'anikova 49, SK-814 73 Bratislava, Slovakia;
pulmann@mat.savba.sk. The second and third authors were supported by
Research and Development Support Agency under the contract No.
APVV-0178-11 and grant VEGA 2/0059/12.}}}

\date{}

\maketitle

\begin{abstract}
A synaptic algebra is a common generalization of several ordered
algebraic structures based on algebras of self-adjoint operators,
including the self-adjoint part of an AW$\sp{\ast}$-algebra. In
this paper we prove that a synaptic algebra $A$ has the monotone
square root property, i.e., if $0\leq a,b\in A$, then $a\leq b
\Rightarrow a\sp{1/2}\leq b\sp{1/2}$.
\end{abstract}

\noindent{\bf Key Words:} synaptic algebra, order-unit norm, monotone
square root property, commutative set, C-block, state.

\medskip

\noindent{\bf AMS Classification} 47B15 (81P10)

\section{Introduction}

Synaptic algebras, which are generalizations of the self-adjoint part
of an AW$\sp{\ast}$-algebra and of a Rickart C$\sp{\ast}$-algebra,
were introduced in \cite{FSA} and further studied in \cite{FPproj,
TDSyn, SymSyn, ComSyn, 2PSyn, P&ESyn, VLSyn, PuNote}. For additional
examples of synaptic algebras, see the cited references. Synaptic
algebras provide natural representations for notions such as observables
and states featured in the study of the mathematical foundations of
quantum mechanics \cite{S&OSA}.

Each synaptic algebra $A$ is a partially ordered real linear subspace
of a corresponding real linear associative algebra $R$ with unit element
$1$. The algebra $R$ is called the \emph{enveloping algebra} of $A$, and
it is assumed that $1\in A$ and that $A$ is an order-unit normed space
with order unit $1$ \cite[pp. 67--69]{Alf}. The positive cone in $A$ is
denoted by $A\sp{+}=\{a\in A: 0\leq a\}$.

\emph{We assume in what follows that $A$ is a synaptic algebra with
enveloping algebra $R$} \cite[Definition 1.1]{FSA}. To avoid trivialities,
we assume that $1\not=0$, which enables us to identify each $\lambda
\in\reals$ (the ordered field of real numbers) with the element $\lambda1
\in A$.  Also in what follows, the notation `iff' abbreviates `if and
only if' and `:=' means `equals by definition.'

Let $a,b\in A$. Then it is understood that the product $ab$ is calculated
in the enveloping algebra $R$ and that it may or may not belong to $A$.
But if $a$ commutes with $b$, in symbols $aCb$, then $ab=ba\in A$. In
particular, $a\sp{2}\in A$, and $A\sp{+}=\{a\sp{2}:a\in A\}$.

If $a\in A\sp{+}$, there exists a unique $a\sp{1/2}\in A\sp{+}$---the
\emph{square root of $a$}---such that $(a\sp{1/2})\sp{2}=a$. The
\emph{absolute value} of $a$ is denoted and defined by $|a|:=(a\sp{2})
\sp{1/2}$. Clearly, $a\in A\sp{+}$ iff $a=|a|$.

Our purpose in this paper is to prove that $A$ has the following
\emph{monotone square root} (MSR) \emph{property}:
\[
\text{If\ }a,b\in A\sp{+}\text{\ and\ }a\leq b,\text{\ then\ }a\sp{1/2}
\leq b\sp{1/2}.
\]
We note that, by \cite[Proposition 4.2.8]{KadRing}, the self-adjoint part
of a C$\sp{\ast}$-algebra has the MSR property. The MSR property plays
an important role, for instance, in the study of vector lattices in
operator algebras \cite{VLSyn, TopVL}.

\begin{remark} \label{rm:ComMSR}
According to \cite[Lemma 3.3]{VLSyn}, if $a,b\in A\sp{+}$ and $aCb$,
then $a\leq b\Leftrightarrow a\sp{1/2}\leq b\sp{1/2}$. Thus the MSR
property holds in the special case when the elements involved commute.
\end{remark}

\section{Preliminaries}

In this section we attend to some definitions, notation, and facts
that will be needed for our proof that $A$ has the MSR property
(Section \ref{sc:ProofMSR} below).

The order-unit norm of $a\in A$ is denoted and defined by
\[
\|a\|:=\inf\{0<\lambda\in\reals:-\lambda\leq a\leq\lambda\}.
\]
(Recall that $\lambda$ is identified with $\lambda 1$.) In what
follows, limits calculated in $A$ are understood to be limits with
respect to the norm $\|\cdot\|$. We shall need the properties of
the norm as per the following lemma.

\begin{lemma} \label{lm:normprops}
Let $a,b\in A\sp{+}$. Then{\rm: (i)} $a\leq b\Rightarrow
\|a\|\leq\|b\|$. {\rm(ii)} $\|a\sp{1/2}\|=\|a\|\sp{1/2}$.
{\rm(iii)} $A\sp{+}$ is norm closed.
\end{lemma}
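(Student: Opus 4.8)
The plan is to handle the three parts separately, relying on the explicit formula for the order-unit norm and, for part~(ii), on the commuting-case equivalence recorded in Remark~\ref{rm:ComMSR}.

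For (i), I would argue straight from the definition of $\|\cdot\|$. Suppose $0\le a\le b$ and let $\lambda>0$ satisfy $-\lambda\le b\le\lambda$. Then $a\le b\le\lambda$ gives $a\le\lambda$, and $a\ge 0\ge-\lambda$ gives $-\lambda\le a$, so every $\lambda$ that bounds $b$ also bounds $a$. Hence $\{\lambda>0:-\lambda\le b\le\lambda\}\subseteq\{\lambda>0:-\lambda\le a\le\lambda\}$, and taking infima over a set and a larger set yields $\|a\|\le\|b\|$.

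For (ii), the first observation is that for $c\in A\sp{+}$ the lower bound is automatic: since $-\mu\le 0\le c$ for every $\mu>0$, we have $\|c\|=\inf\{\mu>0:c\le\mu\}$. Writing $c:=a\sp{1/2}$ (so $c\in A\sp{+}$ and $c\sp{2}=a$), it suffices to prove $\|c\sp{2}\|=\|c\|\sp{2}$. The key step is the equivalence $c\le\mu\Leftrightarrow c\sp{2}\le\mu\sp{2}$ for $\mu>0$: both $c\sp{2}$ and $\mu\sp{2}=\mu\sp{2}\cdot 1$ are positive and commute, since every element commutes with a scalar multiple of $1$, so Remark~\ref{rm:ComMSR} applied to the commuting pair $c\sp{2},\mu\sp{2}$ gives $c\sp{2}\le\mu\sp{2}\Leftrightarrow(c\sp{2})\sp{1/2}\le(\mu\sp{2})\sp{1/2}$, i.e.\ $c\le\mu$. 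Consequently the substitution $\lambda=\mu\sp{2}$ carries $\{\lambda>0:c\sp{2}\le\lambda\}$ onto $\{\mu\sp{2}:\mu>0,\ c\le\mu\}$, and because $\mu\mapsto\mu\sp{2}$ is increasing and continuous on $(0,\infty)$ it commutes with the infimum. This gives $\|c\sp{2}\|=(\inf\{\mu>0:c\le\mu\})\sp{2}=\|c\|\sp{2}$, whence $\|a\sp{1/2}\|=\|a\|\sp{1/2}$.

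For (iii), I would use the elementary consequence of the norm definition that $\|x\|<\varepsilon$ forces $-\varepsilon\le x\le\varepsilon$ (pick an admissible $\lambda$ with $\|x\|\le\lambda<\varepsilon$). Let $a\sb{n}\in A\sp{+}$ with $a\sb{n}\to a$, and fix $\varepsilon>0$. For large $n$ we have $\|a-a\sb{n}\|<\varepsilon$, so $a-a\sb{n}\ge-\varepsilon$, and since $a\sb{n}\ge 0$ this yields $a\ge a\sb{n}-\varepsilon\ge-\varepsilon$. Thus $a\ge-\varepsilon\cdot 1$ for every $\varepsilon>0$; taking $\varepsilon=1/n$ gives $n(-a)\le 1$ for all $n\in\Nat$, and the archimedean property built into the order-unit space then forces $-a\le 0$, i.e.\ $a\in A\sp{+}$.

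The routine parts are (i) and (iii), which amount to bookkeeping with the norm formula and the archimedean axiom. The substantive step is (ii): the main thing to get right is the reduction to $\|c\sp{2}\|=\|c\|\sp{2}$ and, within it, the conversion of the quadratic bound $c\sp{2}\le\mu\sp{2}$ into the linear bound $c\le\mu$. This is exactly where Remark~\ref{rm:ComMSR} does the work, so the only real care needed is to check that the compared elements genuinely commute (which holds because one member of each pair is a scalar multiple of $1$) and that the square-root substitution respects infima.
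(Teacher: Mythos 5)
Your three arguments are all correct, but they take a genuinely different route from the paper: the paper proves this lemma purely by citation, deferring (i) to Goodearl's monograph on partially ordered abelian groups and (ii), (iii) to earlier results in the foundational synaptic-algebras paper \cite{FSA}, whereas you give a self-contained derivation from the definition of the order-unit norm. Your part (i) is the standard inclusion-of-admissible-$\lambda$'s argument and is exactly what the cited result amounts to. Your part (ii) is the most interesting divergence: you reduce $\|a^{1/2}\|=\|a\|^{1/2}$ to the equivalence $c\le\mu\Leftrightarrow c^{2}\le\mu^{2}$ and obtain that equivalence from Remark \ref{rm:ComMSR} applied to the commuting pair $c^{2}$, $\mu^{2}\cdot 1$ (using $(c^{2})^{1/2}=|c|=c$ for $c\in A^{+}$); this is legitimate since the Remark is available as a stated fact, and it replaces an opaque citation with a two-line argument, at the cost of importing the commutative MSR property where the paper's source \cite[Lemma 1.7 (ii)]{FSA} presumably works directly with the spectral/norm calculus. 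Your part (iii) correctly isolates the point that norm convergence gives $a\ge -\varepsilon$ for every $\varepsilon>0$ and then invokes the Archimedean axiom built into the order-unit space structure; this is a clean elementary substitute for the citation to \cite[Theorem 4.7 (iii)]{FSA}. What your approach buys is transparency and independence from the external references; what the paper's approach buys is brevity and reliance on results already established in the literature.
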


\begin{proof}
Part (i) follows from \cite[Proposition 7.12 (c)]{Good}, (ii) is a
consequence of \cite[Lemma 1.7 (ii)]{FSA}, and (iii) follows from
\cite[Theorem 4.7 (iii)]{FSA}.
\end{proof}

If $a,b\in A$, then $aba\in A$ and the \emph{quadratic mapping}
$b\mapsto aba$ is both linear and order preserving on $A$ \cite
[Theorem 4.2]{FSA}.

An idempotent element $p=p\sp{2}\in A$ is called a \emph{projection}
and the set of all projections in $A$ is denoted by $P$. Partially
ordered by the restriction of the partial order on $A$, it turns
out that $P$ is an \emph{orthomodular lattice} (OML) with $p\mapsto p
\sp{\perp}:=1-p$ as the orthocomplementation \cite[\S 5]{FSA}.

An element $a\in A$ is \emph{invertible} iff $a$ has a (necessarily
unique) \emph{inverse} $a\sp{-1}\in A$ such that $aa\sp{-1}=a\sp{-1}a=1$.

\begin{lemma}\label{lm:inverse}
Let $a,b\in A$. Then{\rm:}
\begin{enumerate}
\item $a$ is invertible iff there exists $0<\epsilon\in\reals$ such that
 $\epsilon\leq|a|$.
\item If $0\leq a$ and $a$ is invertible, then  $0\leq a^{-1}$ and
 $a\sp{1/2}$ is invertible.
\item If $0\leq a\leq b$ and $a$ is invertible, then $b$ is invertible
 and $0\leq b\sp{-1} \leq a\sp{-1}$.
\end{enumerate}
\end{lemma}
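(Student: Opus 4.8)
The plan is to prove (i) first, then reduce (ii) and (iii) to it, leaning throughout on the order-preserving quadratic map $x\mapsto uxu$ and on the commuting case of the MSR property (Remark \ref{rm:ComMSR}). For (i) I would first reduce to the positive case. The elements $a$, $a^{2}$, and $|a|=(a^{2})^{1/2}$ mutually commute (a square root commutes with everything that commutes with its radicand), and $|a|^{2}=a^{2}$. One checks that $a$ is invertible iff $a^{2}$ is invertible iff $|a|$ is invertible: if $a^{2}$ is invertible then $aCa^{2}$ forces $aC(a^{2})^{-1}$, and $b:=(a^{2})^{-1}a\in A$ satisfies $ab=ba=1$; the equivalence for $|a|$ is the same argument applied to the positive element $|a|$. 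Thus (i) is equivalent to the assertion that, for $c\in A^{+}$, $c$ is invertible iff $\epsilon\leq c$ for some real $\epsilon>0$.

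For positive $c$ the forward implication is elementary: if $c$ is invertible then $c^{-1}=c^{-1}cc^{-1}\geq 0$ by order-preservation of the quadratic map, and with $M:=\|c^{-1}\|$ we have $c^{-1}\leq M$. Conjugating this by $c^{1/2}$ (which commutes with $c^{-1}$) gives $1=c^{1/2}c^{-1}c^{1/2}\leq Mc$, so $c\geq M^{-1}=\|c^{-1}\|^{-1}>0$ and we take $\epsilon=\|c^{-1}\|^{-1}$. The reverse implication — that the order bound $\epsilon\leq c$ forces $c$ to be invertible — is where I expect the real work to lie, since it must manufacture an inverse out of an inequality. Here I would invoke the spectral theory of \cite{FSA}: from $\epsilon\leq c\leq\|c\|$ we get $\spec(c)\subseteq[\epsilon,\|c\|]$, so $0\notin\spec(c)$, and the functional calculus applied to the continuous function $t\mapsto t^{-1}$ on $[\epsilon,\|c\|]$ yields $c^{-1}\in A$. (Equivalently, writing $c=\|c\|(1-d)$ with $0\leq d$ and $\|d\|\leq 1-\epsilon/\|c\|<1$, a Neumann series produces $c^{-1}$.) This is the main obstacle precisely because it is the one step requiring genuine spectral/analytic input rather than order manipulation.

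Part (ii) follows quickly. If $0\leq a$ is invertible, order-preservation of $x\mapsto a^{-1}xa^{-1}$ gives $a^{-1}=a^{-1}aa^{-1}\geq 0$. For the square root, (i) furnishes $\epsilon>0$ with $\epsilon\leq|a|=a$; since $\epsilon$ is a scalar it commutes with $a$, so the commuting MSR (Remark \ref{rm:ComMSR}) gives $\epsilon^{1/2}\leq a^{1/2}$. As $|a^{1/2}|=a^{1/2}\geq\epsilon^{1/2}>0$, a second application of (i) shows $a^{1/2}$ is invertible.

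For (iii), given $0\leq a\leq b$ with $a$ invertible, (i) yields $\epsilon\leq a\leq b$, so $\epsilon\leq|b|$ and $b$ is invertible, while $b^{-1}\geq 0$ by (ii). To obtain $b^{-1}\leq a^{-1}$, I would conjugate $a\leq b$ by $a^{-1/2}$ (available since $a^{1/2}$ is invertible by (ii)), getting $1\leq a^{-1/2}ba^{-1/2}=:c$. The task then reduces to showing $1\leq c\Rightarrow c^{-1}\leq 1$. Here $c^{-1}$ and $c-1$ are both positive and commute (both commute with $c$), and in a synaptic algebra the product of two commuting positive elements is positive: since $c^{-1}C(c-1)$ one has $c^{-1}(c-1)=(c^{-1})^{1/2}(c-1)(c^{-1})^{1/2}\geq 0$ by the quadratic map, whence $1-c^{-1}=c^{-1}(c-1)\geq 0$, i.e.\ $c^{-1}\leq 1$. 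Finally, conjugating $c^{-1}=a^{1/2}b^{-1}a^{1/2}\leq 1$ by $a^{-1/2}$ gives $b^{-1}\leq a^{-1}$, completing the proof.
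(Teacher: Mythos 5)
Your proposal is essentially correct and, where the paper actually gives arguments, runs parallel to them; the differences are worth noting. For part (i) the paper proves nothing at all --- it simply cites \cite[Lemma 7.2]{FSA} --- whereas you attempt a proof from scratch. Your reduction to the positive case and your forward direction ($c$ invertible $\Rightarrow \|c^{-1}\|^{-1}\leq c$) are fine, and you correctly isolate the reverse direction as the step with real content; but your proposed completions are shaky in this setting: a Neumann series needs norm completeness, which is not among the standing hypotheses, and a full continuous functional calculus is not available either (the representing algebra $F$ in Theorem \ref{th:FunctionalRep} is only a subalgebra of $C(X,\reals)$). In fact the existence of inverses for elements bounded below by a positive scalar is essentially built into the definition of a synaptic algebra (it is one of the axioms in \cite[Definition 1.1]{FSA}), which is why the paper can dispose of (i) with a citation; your instinct that this is ``where the real work lies'' is right, but the work is axiomatic rather than analytic. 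Part (ii) of your proof coincides with the paper's (the paper cites \cite[Lemma 7.1]{FSA} for $0\leq a^{-1}$ where you give the one-line quadratic-map argument; the square-root step via Remark \ref{rm:ComMSR} is identical). Part (iii) is a genuine, and valid, variant: the paper conjugates $a\leq b$ by $b^{-1/2}$ to get $0\leq e:=b^{-1/2}ab^{-1/2}\leq 1$, invokes $e^{2}\leq e$ from \cite[Lemma 2.5(i)]{FSA}, and then conjugates back by $b^{1/2}$ and $a^{-1}$; you instead conjugate by $a^{-1/2}$ to get $1\leq c:=a^{-1/2}ba^{-1/2}$ and deduce $c^{-1}\leq 1$ from the positivity of the product of the commuting positive elements $c^{-1}$ and $c-1$. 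Both routes rest only on the order-preserving quadratic map and elementary commutation facts ($u^{1/2}\in CC(u)$, $xCy$ with $y$ invertible implies $xCy^{-1}$), so your version is a perfectly acceptable substitute; the paper's choice merely trades your ``commuting positives have positive product'' step for a lemma it can cite directly.
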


\begin{proof} (i) holds by \cite[Lemma 7.2]{FSA}. To prove (ii), assume
that $0\leq a$. Then $0\leq a\sp{-1}$ by \cite[Lemma 7.1]{FSA}. Also,
by (i), there exists $0<\epsilon\in\reals$ with $\epsilon\leq a$, and
since $\epsilon Ca$, it follows from Remark \ref{rm:ComMSR} that
$\epsilon\sp{1/2}\leq a\sp{1/2}$, so $a\sp{1/2}$ is invertible.

(iii) In our proof of part (iii), we use properties of quadratic
mappings and we also use the fact that if $e\in A$ and $0\leq e
\leq 1$, then $0\leq e\sp{2}\leq e$ \cite[Lemma 2.5 (i)]{FSA}.
So assume that $0\leq a\leq b$ and $a$ is invertible. Then $b$ and $b
\sp{1/2}$ are invertible by (i) and (ii), whence $0\leq b\sp{-1/2}ab
\sp{-1/2}\leq b\sp{-1/2}bb\sp{-1/2}=1$, and therefore $0\leq(b\sp{-1/2}ab
\sp{-1/2})\sp{2}=b\sp{-1/2}ab\sp{-1}ab\sp{-1/2}\leq b\sp{-1/2}ab\sp{-1/2}$.
Multiplying the latter inequality on both sides, first by $b\sp{1/2}$,
then by $a\sp{-1}$, we obtain $b\sp{-1}\leq a\sp{-1}$.
\end{proof}

Let $a\in A$ and $B\subseteq A$. We define $C(a):=\{b\in A:aCb\}$,
$C(B):=\bigcap\sb{b\in B}C(b)$, and $CC(B):=C(C(B))$. The subset
$B$ of $A$ is said to be \emph{commutative} iff $a,b\in B\Rightarrow
aCb$, i.e., iff $B\subseteq C(B)$. If $B$ is commutative, then so is
$CC(B)$ and $B\subseteq CC(B)$. A \emph{C-block} in $A$ is defined
to be a maximal commutative subset of $A$ \cite[\S 5]{FPRegGHA}.
Evidently, $B$ is a C-block in $A$ iff $B=C(B)$ and by Zorn's lemma,
any commutative subset of $A$, in particular any singleton set $\{a\}$,
can be extended to a C-block.

Suppose that $B\subseteq A$ is a C-block. Then $B$ is closed under
the formation of square roots and inverses, $B$ is a so-called
\emph{sub-synaptic algebra} of $A$ \cite[Definition 2.6]{FPproj},
and $B$ is a commutative synaptic algebra in its own right \cite
[Theorem 2.7]{FPproj}.

If $A$ is a commutative synaptic algebra, then $A$ is a commutative,
associative, partially ordered, Archimedean, real linear algebra
with a unity element $1$ that is an order unit; it is a normed
linear algebra under the order-unit norm; and it may be regarded as
its own enveloping algebra.  By \cite[Theorem 5.11]{VLSyn}, $A$ is
commutative iff $A$ is a vector lattice iff the OML $P$ is a Boolean
algebra. For a commutative synaptic algebra, we have the following
functional representation theorem \cite[Theorem 4.1]{FPproj}.

\begin{theorem} \label{th:FunctionalRep}
Suppose that the synaptic algebra $A$ is commutative, let $X$ be the
Stone space of the Boolean algebra $P$, and denote by $C(X,\reals)$
the partially ordered commutative Banach algebra, with pointwise
operations and partial order and with the supremum {\rm(}or uniform{\rm)}
norm, of all continuous real-valued functions on $X$. Then there is a
subalgebra $F$ of $C(X,\reals)$ such that{\rm:}
\begin{enumerate}
\item The Boolean algebra $P(X,\reals)\subseteq C(X,\reals)$ of all
 characteristic set functions of compact open subsets of $X$ is
 contained in $F$.
\item $F$ is a commutative synaptic algebra with unit $1$ {\rm(}the
 constant function $x\mapsto 1${\rm)} under the operations and
 partial order inherited from $C(X,\reals)$, and the order-unit norm
 on $F$ is the supremum norm.
\item There exists a synaptic isomorphism {\rm(\cite[Definition 2.9]
{FPproj})} $\Psi\colon A\to F$ of $A$ onto $F$ such that the restriction
 of $\Psi$ to $P$ is the Boolean isomorphism of $P$ onto $P(X,\reals)$
corresponding to Stone's representation theorem.
\end{enumerate}
\end{theorem}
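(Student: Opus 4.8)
The plan is to combine Stone duality for the Boolean algebra $P$ with the spectral theory of the commutative synaptic algebra $A$. Because $A$ is commutative, $P$ is Boolean and $X$ is its Stone space, so the compact open subsets of $X$ correspond bijectively to the elements of $P$; writing $U_{p}$ for the compact open set attached to $p\in P$, this is exactly the Boolean isomorphism $\varphi\colon P\to P(X,\reals)$, $\varphi(p)=\chi_{U_{p}}$, furnished by Stone's theorem. The whole construction amounts to extending $\varphi$ from $P$ to all of $A$ so that projections still map to characteristic functions, and then declaring $F$ to be the image of the extension.

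First I would invoke the spectral theory of \cite{FSA}: each $a\in A$ admits a spectral resolution by projections $p_{\lambda}\in P$, and the associated finite spectral sums $\sum_{i}\lambda_{i}(p_{\lambda_{i}}-p_{\lambda_{i-1}})$ converge to $a$ in the order-unit norm $\|\cdot\|$. Hence the real linear span $E:=\mathrm{span}_{\reals}(P)$ is norm dense in $A$, and extending $\varphi$ linearly carries $E$ onto the algebra $\mathrm{span}_{\reals}(P(X,\reals))$ of locally constant step functions on $X$. Writing a typical element of $E$ as $\sum_{i}\lambda_{i}q_{i}$ with pairwise orthogonal $q_{i}\in P$ summing to $1$, the elementary identity $\|\sum_{i}\lambda_{i}q_{i}\|=\max_{i}|\lambda_{i}|$ shows that $\varphi$ is an isometry from $(E,\|\cdot\|)$ into $C(X,\reals)$ equipped with the supremum norm; and a common-refinement argument on the atoms of the finite Boolean subalgebra generated by finitely many projections shows that $\varphi$ is multiplicative and order preserving on $E$.

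Since $\varphi$ is a linear isometry on the dense subspace $E$ into the Banach space $C(X,\reals)$, it extends uniquely to a linear isometry $\Psi\colon A\to C(X,\reals)$; in particular each $\Psi(a)$, being a uniform limit of continuous step functions, is continuous, and $\Psi$ is injective. As multiplication is norm continuous (because $A$ is a normed algebra) and the cones $A^{+}$ and $C(X,\reals)^{+}$ are closed (Lemma \ref{lm:normprops}(iii)), passing to limits from $E$ shows that $\Psi$ is multiplicative and maps positive elements to positive elements. The relation $|a|=(a^{2})^{1/2}$ then yields $\Psi(|a|)=|\Psi(a)|$, and the characterization $a\in A^{+}\Leftrightarrow a=|a|$ together with injectivity shows that $\Psi$ also reflects positivity and preserves square roots. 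Setting $F:=\Psi(A)$ gives a subalgebra of $C(X,\reals)$ containing $P(X,\reals)=\varphi(P)$, which is (i); and $\Psi$ is then a unital order- and algebra-isomorphism of $A$ onto $F$ that preserves square roots, i.e.\ a synaptic isomorphism in the sense of \cite[Definition 2.9]{FPproj} whose restriction to $P$ is $\varphi$, which is (iii). Transporting the synaptic structure of $A$ across $\Psi$ makes $F$ a commutative synaptic algebra whose order-unit norm is, by the isometry, precisely the supremum norm, which is (ii).

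The step I expect to be the main obstacle is the spectral approximation that underlies everything: proving that every $a\in A$ is a genuine order-unit-norm limit of simple elements of $E$, so that its image actually lands in $C(X,\reals)$ rather than merely in some abstract completion. This is precisely where the Archimedean order-unit structure and the spectral-resolution machinery of \cite{FSA} must be brought to bear; once density of $E$ in $A$ and the isometry of $\varphi$ on $E$ are secured, the multiplicativity, order, and norm assertions all follow by routine passage to the limit from the step functions.
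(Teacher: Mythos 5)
The paper does not actually prove this theorem: it is imported wholesale from \cite[Theorem 4.1]{FPproj} (``Projections in a synaptic algebra''), so there is no in-paper argument to compare yours against. That said, your outline --- Stone duality on $P$, norm density of $E=\mathrm{span}_{\reals}(P)$ via the spectral resolution of \cite{FSA}, the isometry $\|\sum_i\lambda_i q_i\|=\max_i|\lambda_i|$ on orthogonal decompositions, and extension by continuity using that a commutative synaptic algebra is a normed algebra with closed positive cone --- is the standard route to such functional representations and is essentially the argument of the cited source. The reasoning is sound as a sketch: well-definedness of the linear extension on $E$ and its multiplicativity both reduce, as you say, to passing to the atoms of the finite Boolean subalgebra generated by the projections involved (discarding zero atoms in the norm formula), and your derivation that $\Psi$ reflects positivity via $\Psi(|a|)=|\Psi(a)|$ is correct once you observe that a positive multiplicative map sends the unique positive square root of $b$ to a nonnegative function squaring to $\Psi(b)$, hence to $\Psi(b)^{1/2}$. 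You correctly identify the load-bearing step as the norm convergence of the spectral sums to $a$, which is exactly what the spectral theorem for synaptic algebras in \cite{FSA} supplies; with that granted, the remaining limit arguments are routine. In short: your proposal is a legitimate reconstruction of the external proof the paper merely cites, not a divergence from anything in this paper.
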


Since the synaptic algebra $A$ is an order-unit space, the following
definition \cite[p. 72]{Alf} applies.

\begin{definition}
A \emph{state} on $A$ is a linear functional $\omega\colon A\to\reals$ such
that (1) $\omega$ is positive, i.e., $a\in A\sp{+}\Rightarrow0\leq\omega(a)$
and (2) $\omega(1)=1$. The set of all states on $A$, called the \emph{state
space} of $A$, is denoted by $S(A)$.
\end{definition}

See \cite[Proposition II.1.7]{Alf} and \cite[Corollary II.1.5]{Alf} for a
proof of the next theorem.

\begin{theorem} \label{th:FnlProps}
Let $a\in A$ and let $\rho\colon A\to\reals$ be a nonzero linear functional
on $A$. Then{\rm:}
\begin{enumerate}
\item $a\in A\sp{+}$ iff $0\leq\omega(a)$ for all $\omega\in S(A)$.
\item $\|a\|=\sup\{|\omega(a)|:\omega\in S(A)\}$.
\item $\rho$ is positive iff it is bounded with $\|\rho\|=\rho(1)$.
\item $\rho\in S(A)$ iff $\|\rho\|=\rho(1)=1$.
\end{enumerate}
\end{theorem}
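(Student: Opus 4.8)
The four assertions are the standard duality facts for an order-unit normed space, and the plan is to derive them from the definition of the order-unit norm together with the norm-closedness of $A\sp{+}$ (Lemma \ref{lm:normprops}(iii)) and the Hahn--Banach theorem. I would prove (iii) first, since it is the engine that drives the rest, read off (iv) immediately, and then obtain (i) and (ii) by separation arguments. Throughout I would use two elementary consequences of the definition of $\|\cdot\|$: that $\|1\|=1$, and that $-\|a\|\leq a\leq\|a\|$ for every $a\in A$ (the latter because $\lambda-a,\lambda+a\in A\sp{+}$ for all $\lambda>\|a\|$ while $A\sp{+}$ is norm closed).

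For (iii), suppose first that $\rho$ is positive. If $\|a\|\leq 1$ then $-1\leq a\leq 1$, so $1\pm a\in A\sp{+}$ and hence $\rho(1)\pm\rho(a)\geq0$, giving $|\rho(a)|\leq\rho(1)$; thus $\rho$ is bounded with $\|\rho\|\leq\rho(1)$, while $\rho(1)=|\rho(1)|\leq\|\rho\|\,\|1\|=\|\rho\|$ forces $\|\rho\|=\rho(1)$. Conversely, if $\rho$ is bounded with $\|\rho\|=\rho(1)$ and $0\leq a$ with $\|a\|\leq1$, then $0\leq1-a\leq1$ gives $\|1-a\|\leq1$, so $\rho(1)-\rho(a)=\rho(1-a)\leq\|\rho\|=\rho(1)$ and hence $\rho(a)\geq0$; scaling handles arbitrary $a\in A\sp{+}$, so $\rho$ is positive. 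Statement (iv) then follows at once: $\rho\in S(A)$ means $\rho$ is positive with $\rho(1)=1$, which by (iii) is equivalent to $\|\rho\|=\rho(1)=1$.

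The crux of the theorem is (i), and the main obstacle is its nontrivial implication: given $a\notin A\sp{+}$ I must produce a state $\omega$ with $\omega(a)<0$. Here I would invoke the geometric Hahn--Banach theorem to separate the point $a$ from the norm-closed convex cone $A\sp{+}$ (Lemma \ref{lm:normprops}(iii)), obtaining a bounded linear functional $\phi$ with $\phi(a)<0\leq\phi(c)$ for all $c\in A\sp{+}$. Then $\phi$ is positive and nonzero, and a short argument using $1$ as an order unit (if $\phi(1)=0$, then $\phi(\|c\|-c)\geq0$ and $\phi(\|c\|+c)\geq0$ for every $c\in A$ force $\phi\equiv0$) shows $\phi(1)>0$; the normalized functional $\omega:=\phi/\phi(1)$ is then a state with $\omega(a)<0$. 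The forward implication of (i) is immediate from the definition of a state.

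Finally, for (ii), the inequality $\sup\{|\omega(a)|:\omega\in S(A)\}\leq\|a\|$ is immediate since $|\omega(a)|\leq\|\omega\|\,\|a\|=\|a\|$ by (iii) and (iv). For the reverse inequality I would use (i): assuming $\|a\|>0$ and choosing $0<\lambda\sb n<\|a\|$ with $\lambda\sb n\uparrow\|a\|$, minimality of $\|a\|$ forces, for each $n$, that $\lambda\sb n-a\notin A\sp{+}$ or $\lambda\sb n+a\notin A\sp{+}$; passing to a subsequence so that one alternative holds throughout and applying (i) yields states $\omega\sb n$ with $\omega\sb n(a)>\lambda\sb n$ (respectively $\omega\sb n(a)<-\lambda\sb n$), whence $\sup\{|\omega(a)|:\omega\in S(A)\}\geq\|a\|$. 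The single genuinely non-elementary ingredient in the whole proof is the Hahn--Banach separation used in (i); everything else is bookkeeping with the order-unit norm.
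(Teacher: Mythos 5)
Your argument is correct. The paper does not actually prove this theorem; it simply cites Alfsen \cite[Proposition II.1.7]{Alf} and \cite[Corollary II.1.5]{Alf}, since these are the standard duality facts for an Archimedean order-unit normed space. Your self-contained proof follows essentially the classical route: (iii) by the elementary estimate $|\rho(a)|\leq\rho(1)\|a\|$ and its converse, (iv) as an immediate corollary, (i) by Hahn--Banach separation of a point from the norm-closed cone $A\sp{+}$ (correctly flagging Lemma \ref{lm:normprops}(iii) as the needed input), and (ii) from (i). The only difference in emphasis is that Alfsen's treatment obtains the norm formula (ii) by Hahn--Banach \emph{extension} from the subspace $\reals 1+\reals a$ dominated by the sublinear functional $p(x)=\inf\{\lambda:x\leq\lambda\}$, and then reads off (i), whereas you separate first and deduce (ii); both are standard and neither is more economical. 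Two small points you gloss over, both routine: in (i) you should note why the separating functional is nonnegative on all of $A\sp{+}$ (use that $A\sp{+}$ is a cone containing $0$, so the separating constant can be taken to be $0$), and in (ii) the case $\|a\|=0$, i.e.\ $a=0$, should be dispatched separately (it is trivial). Also, in (ii) the passage to a subsequence is unnecessary: in either alternative you get a state $\omega\sb{n}$ with $|\omega\sb{n}(a)|>\lambda\sb{n}$, which already gives the supremum bound.
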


\noindent As a consequence of parts (i) and (ii) of Theorem \ref
{th:FnlProps}, the states on $A$ determine both the partial order
$\leq$ and the norm $\|\cdot\|$ on $A$.

\section{A sufficient condition for the MSR\newline property}

In this section we prove that if the MSR property holds for the special
case in which the elements involved are invertible, then $A$ has the
MSR property (Theorem \ref{th:invertibleMSRcase} below).

\begin{lemma} \label{lm:useful}
If $a\in A\sp{+}$ and $n=1,2,3,...$, then{\rm: (i)} $a+1/n\in A\sp{+}$.
{\rm(ii)} $a+1/n$ is invertible. {\rm(iii)} $\lim\sb{n\rightarrow
\infty}(a+1/n)\sp{1/2}=a\sp{1/2}$.
\end{lemma}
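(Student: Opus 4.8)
The plan is to prove the three parts in order, leaning on the standing facts about the order-unit structure and on the commutative machinery already established.

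First I would dispatch (i). Since $a\in A\sp{+}$ means $0\leq a$, and since $1/n = (1/n)1$ with $0 < 1/n\in\reals$ gives $0\leq 1/n$, I simply add: $0\leq a$ and $0\leq 1/n$ yield $0\leq a+1/n$, so $a+1/n\in A\sp{+}$. This uses only that $A\sp{+}$ is a cone.

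Next, for (ii), the natural tool is Lemma \ref{lm:inverse}(i): an element is invertible iff some strictly positive real $\epsilon$ satisfies $\epsilon\leq|a|$. Here $a+1/n\in A\sp{+}$, so $a+1/n = |a+1/n|$ (by the remark in the introduction that an element of $A\sp{+}$ equals its own absolute value). From $0\leq a$ I get $1/n\leq a+1/n = |a+1/n|$, and taking $\epsilon := 1/n > 0$ the criterion is met, so $a+1/n$ is invertible.

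For (iii), the idea is to show that the sequence $c\sb{n}:=(a+1/n)\sp{1/2}$ converges in norm to $a\sp{1/2}$. Because $a\leq a+1/n$ and $a+1/(n{+}1)\leq a+1/n$ all involve commuting elements (everything here lies in $C(a)$, indeed in a single C-block containing $a$), Remark \ref{rm:ComMSR} applies to give $a\sp{1/2}\leq c\sb{n+1}\leq c\sb{n}$, so $(c\sb{n})$ is a decreasing sequence bounded below by $a\sp{1/2}$. The cleanest route is then to compute the norm difference directly: working inside the commutative synaptic algebra $B=CC(\{a\})$, I can use the functional representation Theorem \ref{th:FunctionalRep} to identify $a$, $a\sp{1/2}$, and each $c\sb{n}$ with continuous functions, where $c\sb{n}$ corresponds pointwise to $\sqrt{f+1/n}$ for $f=\Psi(a)\geq 0$; the elementary real inequality $0\leq\sqrt{t+1/n}-\sqrt{t}\leq (1/n)\sp{1/2}$ for $t\geq 0$ then gives $\|c\sb{n}-a\sp{1/2}\|\leq (1/n)\sp{1/2}\to 0$ in the supremum norm, which is the order-unit norm on $B$ by part (ii) of that theorem. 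Since the norm on $B$ agrees with the restriction of the norm on $A$, this yields $\lim\sb{n\to\infty}c\sb{n}=a\sp{1/2}$ in $A$.

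The main obstacle I anticipate is part (iii), specifically justifying the norm estimate without circularity. One must be careful that the elementary scalar bound $\sqrt{t+s}-\sqrt{t}\leq\sqrt{s}$ transfers to the algebra only through a representation that genuinely respects both the square root and the norm; the functional representation theorem supplies exactly this, reducing the operator statement to a uniform estimate of continuous functions on the Stone space $X$. An alternative that avoids the representation altogether is to exploit monotonicity and the order-unit norm: from $a\sp{1/2}\leq c\sb{n}$ one gets $0\leq c\sb{n}-a\sp{1/2}$, and squaring (legitimate since these commute) together with Lemma \ref{lm:normprops}(ii) controls $\|c\sb{n}-a\sp{1/2}\|$ via $\|c\sb{n}\sp{2}-a\|=\|1/n\|=1/n$; but making the passage from the norm of the difference of squares to the norm of the difference rigorous is precisely where the MSR-type estimate reappears, so the representation-based argument is the more transparent one to present.
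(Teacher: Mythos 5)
Your parts (i) and (ii) coincide with the paper's argument. For part (iii) you take a genuinely different route. The paper stays entirely inside $A$ and argues algebraically: writing $d\sb{n}:=(a+1/n)\sp{1/2}-a\sp{1/2}$, it uses the commuting factorization $d\sb{n}\,[(a+1/n)\sp{1/2}+a\sp{1/2}]=1/n$ to get $d\sb{n}=(1/n)[(a+1/n)\sp{1/2}+a\sp{1/2}]\sp{-1}$, then bounds the inverse from above via $(1/n\sp{1/2})(a+1)\sp{1/2}\leq(a+1/n)\sp{1/2}+a\sp{1/2}$ and Lemma \ref{lm:inverse}(iii), arriving at $0\leq d\sb{n}\leq(1/n\sp{1/2})(a+1)\sp{-1/2}$ and hence $\|d\sb{n}\|\to0$ by Lemma \ref{lm:normprops}(i). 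You instead pass through the functional representation (Theorem \ref{th:FunctionalRep}) to reduce the estimate to the scalar inequality $0\leq\sqrt{t+1/n}-\sqrt{t}\leq(1/n)\sp{1/2}$, obtaining the cleaner uniform bound $\|d\sb{n}\|\leq(1/n)\sp{1/2}$. Both arguments are sound; yours is shorter once the representation theorem is invoked, while the paper's avoids that machinery in this lemma (reserving it for Section \ref{sc:ProofMSR}) and uses only order and inverse manipulations in $A$. Two small points to tighten in your write-up: the paper asserts the functional representation and closure under square roots only for C-blocks, so you should work in a C-block $B$ containing $a$ (which exists by Zorn's lemma) rather than in $CC(\{a\})$, whose status as a commutative synaptic algebra is not established within this paper; and you should note explicitly that the order-unit norm of $B$ is the restriction of the norm of $A$ (immediate, since the order on $B$ is the restricted order and the unit is the same) and that $\Psi$ carries it to the supremum norm, so the uniform scalar estimate really does control the norm in $A$. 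Your concluding caution about the squaring shortcut is well taken --- that route does reintroduce an MSR-type estimate and is best avoided.
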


\begin{proof}
Assume that $a\in A\sp{+}$ and that $n$ is a positive integer.
Obviously, (i) holds, and as $1/n\leq a+1/n$, (ii) follows from Lemma
\ref{lm:inverse} (i). Also, by Remark \ref{rm:ComMSR}, $(1/n)\sp{1/2}
\leq(a+1/n)\sp{1/2}\leq(a+1/n)\sp{1/2}+a\sp{1/2}$, hence $(a+1/n)
\sp{1/2}+a\sp{1/2}$ is also invertible. Thus,
\setcounter{equation}{0}
\[
(a+1/n)\sp{1/2}-a\sp{1/2}=[(a+1/n)\sp{1/2}-a\sp{1/2}][(a+1/n)\sp{1/2}
 +a\sp{1/2}][(a+1/n)\sp{1/2}+a\sp{1/2}]\sp{-1}
\]
\begin{equation} \label{eq:001}
=[a+1/n-a][(a+1/n)\sp{1/2}+a\sp{1/2}]\sp{-1}=(1/n)[(a+1/n)\sp{1/2}+a
 \sp{1/2}]\sp{-1}.
\end{equation}
Furthermore, as $a\leq a+1/n$, it follows from Remark \ref{rm:ComMSR}
that, in (\ref{eq:001}), $0\leq(a+1/n)\sp{1/2}-a\sp{1/2}$. Again by
Remark \ref{rm:ComMSR}, we have $(a+1)\sp{1/2}\leq(na+1)\sp{1/2}$, so
\[
0\leq(1/n\sp{1/2})(a+1)\sp{1/2}\leq(1/n)\sp{1/2}(na+1)\sp{1/2}=
[(na+1)/n]\sp{1/2}
\]
\[
=(a+1/n)\sp{1/2}\leq(a+1/n)\sp{1/2}+a\sp{1/2},
\]
whence by Lemma \ref{lm:inverse} (iii),
\[
0\leq[(a+1/n)\sp{1/2}+a\sp{1/2}]\sp{-1}\leq[(1/n\sp{1/2})(a+1)
 \sp{1/2}]\sp{-1}=n\sp{1/2}(a+1)\sp{-1/2},
\]
and therefore
\begin{equation} \label{eq:002}
0\leq(1/n)[(a+1/n)\sp{1/2}+a\sp{1/2}]\sp{-1}\leq(1/n\sp{1/2})
 (a+1)\sp{-1/2}.
\end{equation}
Combining (\ref{eq:001}) and (\ref{eq:002}), we find that
\begin{equation} \label{eq:003}
0\leq (a+1/n)\sp{1/2}-a\sp{1/2}\leq(1/n\sp{1/2})(a+1)\sp{-1/2}.
\end{equation}
By (\ref{eq:003}) and Lemma \ref{lm:normprops} (i), we infer that
\begin{equation} \label{eq:004}
\|(a+1/n)\sp{1/2}-a\sp{1/2}\|\leq(1/n\sp{1/2})\|(a+1)\sp{-1/2}\|,
\end{equation}
from which (iii) follows.
\end{proof}

\begin{theorem} \label{th:invertibleMSRcase}
Suppose that, whenever $a,b\in A\sp{+}$, both $a$ and $b$ are
invertible, and $a\leq b$, then $a\sp{1/2}\leq b\sp{1/2}$. Then
$A$ has the MSR property.
\end{theorem}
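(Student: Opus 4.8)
The plan is to deduce the general MSR property from the assumed invertible case by a limiting argument, using the perturbed approximants $a+1/n$ supplied by Lemma \ref{lm:useful}. The preliminary lemmas have been arranged precisely to make this reduction go through, so the proof should be short.

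First I would take arbitrary $a,b\in A\sp{+}$ with $a\leq b$ and fix a positive integer $n$. Adding $1/n$ to both sides preserves the order, so $a+1/n\leq b+1/n$, and by Lemma \ref{lm:useful}(i) and (ii) both $a+1/n$ and $b+1/n$ lie in $A\sp{+}$ and are invertible. Hence the hypothesis of the theorem applies to the pair $a+1/n\leq b+1/n$, yielding
\[
(a+1/n)\sp{1/2}\leq(b+1/n)\sp{1/2},
\]
equivalently $(b+1/n)\sp{1/2}-(a+1/n)\sp{1/2}\in A\sp{+}$, for every $n$.

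Next I would pass to the limit as $n\to\infty$. By Lemma \ref{lm:useful}(iii) we have $(a+1/n)\sp{1/2}\to a\sp{1/2}$ and $(b+1/n)\sp{1/2}\to b\sp{1/2}$ in the order-unit norm, so the differences converge, $(b+1/n)\sp{1/2}-(a+1/n)\sp{1/2}\to b\sp{1/2}-a\sp{1/2}$. Since each of these differences lies in the positive cone and, by Lemma \ref{lm:normprops}(iii), $A\sp{+}$ is norm closed, the limit $b\sp{1/2}-a\sp{1/2}$ also lies in $A\sp{+}$; that is, $a\sp{1/2}\leq b\sp{1/2}$, which is exactly the MSR property.

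Since this is a clean approximation, I do not anticipate a serious obstacle. The only point requiring care is the passage to the limit, and that rests squarely on the two facts prepared above: the norm convergence of the perturbed square roots (Lemma \ref{lm:useful}(iii)) and the closedness of $A\sp{+}$ (Lemma \ref{lm:normprops}(iii)). In effect, all the work has already been front-loaded into Lemma \ref{lm:useful}, whose entire purpose is to guarantee that invertible positive approximants exist and that their square roots converge to $a\sp{1/2}$; the theorem then follows by closing the cone under the limit.
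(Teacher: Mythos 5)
Your proposal is correct and follows exactly the same route as the paper: apply the invertible-case hypothesis to $a+1/n\leq b+1/n$ using Lemma \ref{lm:useful}(i),(ii), then pass to the limit via Lemma \ref{lm:useful}(iii) and the norm-closedness of $A\sp{+}$ from Lemma \ref{lm:normprops}(iii). No gaps.
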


\begin{proof}
Assume the hypothesis of the theorem, suppose that $a,b\in A\sp{+}$
with $a\leq b$, and let $n$ be a positive integer. Then $a+1/n,\,
b+1/n\in A\sp{+}$ and both are invertible by Lemma \ref{lm:useful}
(i) and (ii). Clearly, $a+1/n\leq b+1/n$, whence $(b+1/n)\sp{1/2}
-(a+1/n)\sp{1/2}\in A\sp{+}$ by our hypothesis, and by Lemmas
\ref{lm:useful} (iii) and \ref{lm:normprops} (iii) we have
\[
b\sp{1/2}-a\sp{1/2}=\lim\sb{n\rightarrow\infty}\left((b+1/n)\sp{1/2}-
(a+1/n)\sp{1/2}\right)\in A\sp{+},
\]
whereupon $a\sp{1/2}\leq b\sp{1/2}$.
\end{proof}

\section{Proof of the MSR property} \label{sc:ProofMSR}

\begin{lemma} \label{lm:C-block}
Let $B$ be a C-block in $A$, suppose that $\omega\in S(A)$, let $\omega
\sb{0}$ be the restriction of $\omega$ to the commutative synaptic
algebra $B$, and let $X$ be the Stone space of the Boolean algebra $P
\cap B$ of projections in $B$. Then{\rm:}
\begin{enumerate}
\item There exists a subalgebra $F$ of the commutative Banach algebra
 $C(X,\reals)$ such that $F$ is a commutative synaptic algebra, and
there is a synaptic isomorphism $\Psi\colon B\to F$ of $B$ onto $F$.
\item $\omega\sb{0}\circ\Psi\sp{-1}\in S(F)$ and $\omega\sb{0}\circ
 \Psi\sp{-1}$ can be extended to a bounded positive linear functional
 $\hat{\omega}$ on $C(X,\reals)$ with preservation of norm.
\item There is a Borel measure $m\sb{\omega}$ on $X$ such that
\[
\omega(\Psi\sp{-1}(f))=\int\sb{X}f(x)\, dm\sb{\omega}(x)\text
{\ for all\ }f\in F.
\]
\item Suppose that $0\leq a\in B$, $a$ is invertible, $f:=\Psi(a)\in F$,
 and $0\leq\lambda\in\reals$. Then $\lambda+a$ is invertible in $B$,
 $\lambda+f$ is invertible in $F$, and
\[
\omega\left(a(\lambda+a)\sp{-1}\right)=\int\sb{X}\frac{f(x)}{\lambda +f(x)}\,
 dm\sb{\omega}(x).
\]
\end{enumerate}
\end{lemma}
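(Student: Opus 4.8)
The plan is to read off parts (i)--(iii) from the functional representation of the C-block $B$ together with the Riesz representation theorem, and then to obtain (iv) by transporting the elementary identity $a(\lambda+a)\sp{-1}=f(\lambda+f)\sp{-1}$ through the isomorphism $\Psi$. First I would dispose of (i): since $B$ is a C-block, it is a commutative synaptic algebra whose Boolean algebra of projections is $P\cap B$ with Stone space $X$, so (i) is exactly Theorem \ref{th:FunctionalRep} applied to $B$, producing $F\subseteq C(X,\reals)$ and the synaptic isomorphism $\Psi\colon B\to F$. For (ii), I note that $\omega\sb{0}=\omega|\sb{B}$ is positive (because $B\sp{+}\subseteq A\sp{+}$) and satisfies $\omega\sb{0}(1)=1$, hence is a state on $B$; since $\Psi$ preserves positivity and the unit, $\omega\sb{0}\circ\Psi\sp{-1}\in S(F)$. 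By Theorem \ref{th:FnlProps}(iv) this functional has norm $1$ on $F$, and because the order-unit norm on $F$ is the supremum norm inherited from $C(X,\reals)$ (Theorem \ref{th:FunctionalRep}(ii)), the Hahn--Banach theorem furnishes a norm-preserving extension $\hat{\omega}$ to $C(X,\reals)$. As the constant function $1$ lies in $F$ and $\hat{\omega}(1)=1=\|\hat{\omega}\|$, the order-unit characterization of positivity (the $C(X,\reals)$ analogue of Theorem \ref{th:FnlProps}(iii)) shows $\hat{\omega}$ is positive.

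For (iii), $X$ is the Stone space of a Boolean algebra, hence compact Hausdorff, so the Riesz representation theorem applied to the positive functional $\hat{\omega}$ yields a regular Borel measure $m\sb{\omega}$ on $X$ with $\hat{\omega}(g)=\int\sb{X}g\,dm\sb{\omega}$ for every $g\in C(X,\reals)$. Restricting to $f\in F$ and using that $\hat{\omega}$ extends $\omega\sb{0}\circ\Psi\sp{-1}$, together with $\omega\sb{0}=\omega|\sb{B}$ and $\Psi\sp{-1}(f)\in B$, gives $\omega(\Psi\sp{-1}(f))=\int\sb{X}f\,dm\sb{\omega}$, which is the asserted formula.

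Finally, for (iv): since $0\leq a$ is invertible, Lemma \ref{lm:inverse}(i) supplies $0<\epsilon\in\reals$ with $\epsilon\leq a$; then $\epsilon\leq a\leq\lambda+a$ shows $\lambda+a$ is invertible in $B$ (its inverse lying in $B$, a C-block being closed under inverses). Applying the algebra isomorphism $\Psi$ gives invertibility of $\lambda+f=\Psi(\lambda+a)$ in $F$ with $\Psi((\lambda+a)\sp{-1})=(\lambda+f)\sp{-1}$, while $f=\Psi(a)\geq\Psi(\epsilon)=\epsilon>0$ pointwise guarantees $\lambda+f(x)>0$ on $X$. Because $B$ is commutative, $a(\lambda+a)\sp{-1}\in B$ and $\Psi(a(\lambda+a)\sp{-1})=f(\lambda+f)\sp{-1}$, i.e.\ the function $x\mapsto f(x)/(\lambda+f(x))$; feeding this element of $F$ into the formula of (iii) produces the desired integral expression.

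I expect the only genuinely delicate step to be the positive, norm-preserving extension in (ii): Hahn--Banach by itself yields only a norm-preserving \emph{linear} extension $\hat{\omega}$, and one must still invoke the order-unit characterization of positivity---legitimate precisely because $1\in F$ and the norm on $F$ is the ambient supremum norm---to conclude that $\hat{\omega}\geq 0$. This positivity is exactly what licenses the application of the Riesz representation theorem in (iii); everything else reduces to the functoriality of $\Psi$ and the invertibility criterion of Lemma \ref{lm:inverse}.
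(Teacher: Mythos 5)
Your proposal is correct and follows essentially the same route as the paper: Theorem \ref{th:FunctionalRep} for (i), Hahn--Banach plus the order-unit characterization of positivity for (ii), the Riesz representation theorem for (iii), and transporting $a(\lambda+a)\sp{-1}$ through $\Psi$ into the formula of (iii) for (iv). Your extra care in (ii) about why the norm-preserving extension is automatically positive, and in (iv) about $a(\lambda+a)\sp{-1}$ lying in $B$ so that its image lies in $F$, matches (and slightly elaborates) the paper's argument.
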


\begin{proof}
(i) Part (i) follows from Theorem \ref{th:FunctionalRep}.

(ii) Clearly $\omega\sb{0}\in S(B)$, and it follows that $\omega\sb{0}
\circ\Psi\sp{-1}\in S(F)$. The existence of an extension of $\omega\sb{0}
\circ\Psi\sp{-1}$ to a bounded linear functional $\hat{\omega}$ with the
same norm on $C(X,\reals)$ follows from the Hahn-Banach extension theorem
\cite[Theorem 1.6.1]{KadRing}. Thus $\hat{\omega}(1)=(\omega\sb{0}\circ
\Psi\sp{-1})(1)=\|\omega\sb{0}\circ\Psi\sp{-1}\|=\|\hat{\omega}\|$,
and by Theorem \ref{th:FnlProps} (iv), $\hat{\omega}$ is positive.

(iii) By the Riesz representation theorem \cite[p. 247, Theorem D]{Halmos},
there is a Borel measure $m\sb{\omega}$ on $X$ such that, for all $f\in
C(X,\reals)$, $\hat{\omega}(f)=\int\sb{X}f(x)dm\sb{\omega}(x)$, from which
$\omega(\Psi\sp{-1}(f))=\int\sb{X}f(x)dm\sb{\omega}(x)$ follows.

(iv) Assume the hypotheses of (iv). As $0\leq a\leq\lambda+a\in B$, it
follows from Lemma \ref{lm:inverse} that $\lambda+a$ is invertible in
$B$, and since $\Psi\colon B\to F$ is a synaptic isomorphism, $\Psi
(\lambda+a)=\lambda+f$ is invertible in $F$. The integral formula in
(iv) then follows upon replacing $f$ in (iii) by $\frac{f}{\lambda+f}$.
\end{proof}

The integral formula for the square root in the proof of the next
theorem is suggested by \cite[(V.5) p.116]{Bh}.

\begin{theorem} \label{th:omegaSRintformula}
There exists a positive $\sigma$-finite Borel measure $\mu$ on
 $(0,\infty)\subseteq\reals$ such that, for every invertible element
 $a\in A\sp{+}$ and every state $\omega\in S(A)$,
\[
\omega(a\sp{1/2})=\int\sb{0}\sp{\infty}\omega\left(a(\lambda+a)
 \sp{-1}\right) d\mu(\lambda).
\]
\end{theorem}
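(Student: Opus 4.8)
The plan is to establish the scalar integral identity first and then transport it to every state through the functional-calculus machinery already assembled in Lemma~\ref{lm:C-block}. The suggested formula $(V.5)$ in \cite{Bh} is the classical representation
\[
t\sp{1/2}=\frac{1}{\pi}\int\sb{0}\sp{\infty}\frac{t}{\lambda+t}\,\lambda\sp{-1/2}\,d\lambda\quad\text{for }t>0,
\]
which one verifies by the substitution $\lambda=ts$ (reducing the integral to $\int\sb{0}\sp{\infty}(1+s)\sp{-1}s\sp{-1/2}\,ds=\pi$, a standard Beta-function value). So first I would \emph{define} the measure to be $d\mu(\lambda):=\frac{1}{\pi}\lambda\sp{-1/2}\,d\lambda$ on $(0,\infty)$, check that it is a positive $\sigma$-finite Borel measure (it is, since $\lambda\sp{-1/2}$ is locally integrable and integrable at infinity against bounded compactly supported test data), and record the pointwise scalar identity above.

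Next I would fix an invertible $a\in A\sp{+}$ and a state $\omega\in S(A)$, and choose a $C$-block $B$ containing $a$ (possible by Zorn's lemma, as the excerpt notes every singleton extends to a $C$-block). Since $B$ is a commutative synaptic algebra closed under square roots and inverses, both $a\sp{1/2}$ and $a(\lambda+a)\sp{-1}$ lie in $B$ for every $\lambda\geq 0$. Applying Lemma~\ref{lm:C-block}, let $\Psi\colon B\to F$ be the synaptic isomorphism onto $F\subseteq C(X,\reals)$, set $f:=\Psi(a)$, and let $m\sb{\omega}$ be the Borel measure on $X$ from part~(iii). Because $\Psi$ is a synaptic isomorphism it preserves square roots, so $\Psi(a\sp{1/2})=f\sp{1/2}$ pointwise; part~(iii) then gives $\omega(a\sp{1/2})=\int\sb{X}f(x)\sp{1/2}\,dm\sb{\omega}(x)$, while part~(iv) gives $\omega\bigl(a(\lambda+a)\sp{-1}\bigr)=\int\sb{X}\frac{f(x)}{\lambda+f(x)}\,dm\sb{\omega}(x)$ for each $\lambda\geq 0$. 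Note $f(x)>0$ for all $x\in X$: invertibility of $a$ forces, via Lemma~\ref{lm:inverse}(i), some $\epsilon>0$ with $\epsilon\leq a$, hence $\epsilon\leq f$ pointwise, so the scalar identity applies at every point $t=f(x)$.

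The final move is to substitute the scalar identity into the right-hand side and interchange the two integrations:
\[
\int\sb{0}\sp{\infty}\omega\bigl(a(\lambda+a)\sp{-1}\bigr)\,d\mu(\lambda)=\int\sb{0}\sp{\infty}\!\!\int\sb{X}\frac{f(x)}{\lambda+f(x)}\,dm\sb{\omega}(x)\,d\mu(\lambda)=\int\sb{X}\!\!\int\sb{0}\sp{\infty}\frac{f(x)}{\lambda+f(x)}\,d\mu(\lambda)\,dm\sb{\omega}(x)=\int\sb{X}f(x)\sp{1/2}\,dm\sb{\omega}(x),
\]
the last step by the scalar identity applied at each $t=f(x)$, and this equals $\omega(a\sp{1/2})$. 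The main obstacle is justifying the Tonelli/Fubini interchange: the integrand $\frac{f(x)}{\lambda+f(x)}$ is nonnegative and jointly measurable, and $m\sb{\omega}$ is a finite measure ($m\sb{\omega}(X)=\hat\omega(1)=1$) while $\mu$ is $\sigma$-finite, so Tonelli's theorem applies directly with no integrability hypothesis needed—this is exactly why I fixed the measure as a nonnegative density at the outset. A minor point to verify is uniform measurability of $(\lambda,x)\mapsto\frac{f(x)}{\lambda+f(x)}$, which follows since $f$ is continuous and the map is continuous in $\lambda$ and in $x$ separately with a jointly continuous formula away from the (unattained) pole.
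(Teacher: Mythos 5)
Your proposal is correct and follows essentially the same route as the paper: the scalar identity $t^{1/2}=\frac{1}{\pi}\int_0^\infty\frac{t}{\lambda+t}\lambda^{-1/2}\,d\lambda$, the same choice of $\mu$, the C-block representation from Lemma \ref{lm:C-block}, and a Fubini--Tonelli interchange. The only differences are cosmetic (you verify the scalar identity with the substitution $\lambda=ts$ rather than $\lambda=tz^2$, and you work from the right-hand side toward $\omega(a^{1/2})$ instead of the reverse), and your explicit appeal to Tonelli for the nonnegative integrand is a slightly cleaner justification of the interchange than the paper's bare citation of Fubini.
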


\begin{proof}
Suppose that $0<t\in\reals$. By the substitution $\lambda=tz\sp{2}$
with $0\leq z$, we find that
\[
\int\sb{0}\sp{\infty}\frac{t}{\lambda+t}\lambda\sp{-1/2}d\lambda
 =2t\sp{1/2}\int\sb{0}\sp{\infty}\frac{dz}{1+z\sp{2}}=2t\sp{1/2}
 \frac{\pi}{2}=\pi t\sp{1/2},
\]
whence
\setcounter{equation}{0}
\begin{equation} \label{eq:int01}
t\sp{1/2}=\frac{1}{\pi}\int\sb{0}\sp{\infty}\frac{t}{\lambda+t}
 \lambda\sp{-1/2}d\lambda.
\end{equation}
Putting  $d\mu(\lambda)=\frac{1}{\pi}\lambda\sp{-1/2}d\lambda$
in (\ref{eq:int01}), we obtain a positive $\sigma$-finite Borel
measure $\mu$ on $(0,\infty)\subseteq\reals$, and we may write
\begin{equation} \label{eq:int02}
t\sp{1/2}=\int\sb{0}\sp{\infty}\frac{t}{\lambda+t}\,d\mu(\lambda)
 \text{\ for\ }0<t\in\reals.
\end{equation}
Now let $a$ be an invertible element in $A\sp{+}$, let $\omega
\in S(A)$, choose a C-block $B$ with $a\in B$ and let $X$
be the Stone space of $P\cap B$. By Lemma \ref{lm:C-block} (i),
there is a synaptic subalgebra $F$ of $C(X,\reals)$ and there is
a synaptic isomorphism $\Psi$ of $B$ onto $F$. As in Lemma \ref
{lm:C-block} (iv), we put $f:=\Psi(a)$, so that $f\sp{1/2}=\Psi
(a\sp{1/2})$. Moreover, as $a$ is invertible, so is $a\sp{1/2}$,
hence also $f\sp{1/2}$, and we have $0<f\sp{1/2}(x)$ for all $x
\in X$. Thus, by Lemma \ref{lm:C-block} (iii) with $f$ replaced
by $f\sp{1/2}$ and (\ref{eq:int02}),
 \begin{equation} \label{eq:int03}
\omega(a\sp{1/2})=\int\sb{X}f\sp{1/2}(x)\, dm\sb{\omega}(x)=\int
 \sb{X}\left(\int\sb{0}\sp{\infty}\frac{f(x)}{\lambda+f(x)}\,
 d\mu(\lambda)\right)dm\sb{\omega}(x).
\end{equation}
Applying Fubini's theorem \cite[Theorem C, p. 148]{Halmos} to
(\ref{eq:int03}), we have
\begin{equation} \label{eq:int04}
\omega(a\sp{1/2})=\int\sb{0}\sp{\infty}\left(\int\sb{X}\frac{f(x)}
 {\lambda+f(x)}\, dm\sb{\omega}(x)\right)\,d\mu(\lambda),
\end{equation}
and combining (\ref{eq:int04}) with Lemma \ref{lm:C-block} (iv),
we obtain the desired integral formula for $\omega(a\sp{1/2})$.
\end{proof}

\begin{theorem}\label{th:MSR}
The synaptic algebra $A$ has the MSR property.
\end{theorem}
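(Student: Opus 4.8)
The plan is to invoke Theorem~\ref{th:invertibleMSRcase}, which reduces the entire problem to invertible elements: it suffices to show that whenever $a,b\in A\sp{+}$ are both invertible and $a\leq b$, then $a\sp{1/2}\leq b\sp{1/2}$. So I would fix such $a$ and $b$. By Theorem~\ref{th:FnlProps}~(i), proving $a\sp{1/2}\leq b\sp{1/2}$, i.e. $b\sp{1/2}-a\sp{1/2}\in A\sp{+}$, is equivalent to showing $\omega(a\sp{1/2})\leq\omega(b\sp{1/2})$ for every state $\omega\in S(A)$.

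Next I would bring in the integral representation of Theorem~\ref{th:omegaSRintformula}. Applying it to both $a$ and $b$ with the one common measure $\mu$, the inequality $\omega(a\sp{1/2})\leq\omega(b\sp{1/2})$ will follow once I establish, for each fixed $\lambda\in(0,\infty)$, the pointwise inequality $\omega\bigl(a(\lambda+a)\sp{-1}\bigr)\leq\omega\bigl(b(\lambda+b)\sp{-1}\bigr)$. Since $\omega$ is positive, hence order preserving, it is in turn enough to prove the order inequality
\[
a(\lambda+a)\sp{-1}\leq b(\lambda+b)\sp{-1}
\]
inside $A$ itself, uniformly in $\lambda$. This is the crux of the argument.

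To handle it I would rewrite the resolvent-type expression by the identity $a(\lambda+a)\sp{-1}=1-\lambda(\lambda+a)\sp{-1}$ (and likewise for $b$), which is legitimate because $a$ commutes with $(\lambda+a)\sp{-1}$. Then $a(\lambda+a)\sp{-1}\leq b(\lambda+b)\sp{-1}$ is equivalent, after dividing by $\lambda>0$, to $(\lambda+b)\sp{-1}\leq(\lambda+a)\sp{-1}$. But $a\leq b$ gives $0<\lambda+a\leq\lambda+b$ with $\lambda+a$ invertible, so the antitonicity of the inverse recorded in Lemma~\ref{lm:inverse}~(iii) yields exactly $0\leq(\lambda+b)\sp{-1}\leq(\lambda+a)\sp{-1}$. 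Assembling the chain---reduction to invertible elements, separation of the order by states, the integral formula, and finally antitonicity of the inverse---completes the proof.

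I expect the genuinely hard work to have already been absorbed into Theorem~\ref{th:omegaSRintformula}: the square-root map is not obviously monotone, but the integral representation trades its monotonicity for the antitonicity of the inverse, which \emph{is} available through Lemma~\ref{lm:inverse}~(iii). Thus the main obstacle here is conceptual rather than computational, namely recognizing that $\lambda\mapsto t/(\lambda+t)$ is the operator-monotone building block and that its monotonicity survives both the application of an arbitrary state and the integration against $\mu$; once that is in hand, the remaining steps are routine order manipulations.
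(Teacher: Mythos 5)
Your proposal is correct and follows essentially the same route as the paper: reduction to invertible elements via Theorem~\ref{th:invertibleMSRcase}, the identity $a(\lambda+a)^{-1}=1-\lambda(\lambda+a)^{-1}$ combined with the antitonicity of the inverse from Lemma~\ref{lm:inverse}~(iii), then the integral formula of Theorem~\ref{th:omegaSRintformula} and separation by states via Theorem~\ref{th:FnlProps}~(i).
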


\begin{proof}
Suppose that $a,b\in A\sp{+}$, both $a$ and $b$ are invertible,
and $a\leq b$. By Theorem \ref{th:invertibleMSRcase}, it will
be sufficient to prove that $a\sp{1/2}\leq b\sp{1/2}$. Let
$0\leq\lambda\in\reals$. Then $a\leq \lambda+a$, $b\leq \lambda
+b$, and therefore both $\lambda+a$ and $\lambda+b$ are invertible
with $\lambda+a\leq\lambda+b$. Consequently, $(\lambda+b)\sp{-1}
\leq(\lambda+a)\sp{-1}$ by Lemma \ref{lm:inverse} (iii), whence
$\lambda(\lambda+b)\sp{-1}\leq\lambda(\lambda+a)\sp{-1}$, and
therefore $1-\lambda(\lambda+a)\sp{-1}\leq1-\lambda(\lambda+b)
\sp{-1}$. But
\[
1-\lambda(\lambda+a)\sp{-1}=(\lambda+a)(\lambda+a)\sp{-1}-
\lambda(\lambda+a)\sp{-1}
\]
\[
=(\lambda+a-\lambda)(\lambda+a)\sp{-1}=a(\lambda+a)\sp{-1},
\]
likewise $1-\lambda(\lambda+b)\sp{-1}=b(\lambda+b)\sp{-1}$, and
we have $a(\lambda+a)\sp{-1}\leq b(\lambda+b)\sp{-1}$.  Thus,
$\omega(a(\lambda+a)\sp{-1})\leq\omega(b(\lambda+b)\sp{-1})$ for
all $\omega\in S(A)$, and by Theorem \ref{th:omegaSRintformula},
we infer that $\omega(a\sp{1/2})\leq\omega(b\sp{1/2})$. Thus
$a\sp{1/2}\leq b\sp{1/2}$ by Theorem \ref{th:FnlProps} (i).
\end{proof}

\noindent{\bf Concluding Remarks.} In \cite[Exercise V.1.10]{Bh}, R.
Bhatia outlines a proof that if $r\in\reals$ with $0<r<1$, then
the function $f(x):=x\sp{r}$ for $0<x\in\reals$ is operator monotone
on Hermitian matrices. Our proof with $r=1/2$ for a synaptic algebra
is partially based on Bhatia's argument. Pioneering work on monotone
functions of matrices and operators was conducted by K. L\"{o}wner in
\cite{KarlL} and G. Pedersen gave a short proof of L\"{o}wner's
operator monotone theorem in \cite{GertP}. It would be interesting
and significant to determine the extent to which the results of
L\"{o}wner, Bhatia, Pedersen, et al. can be extended to synaptic
algebras, and we hope that our work in this paper might provide a
point of departure for such a project.

\end{document}